\numberwithin{equation}{section}
\newtheorem{lemma}[equation]{Lemma}
\newtheorem{theorem}[equation]{Theorem}
\newtheorem{conjecture}[equation]{Conjecture}
\newtheorem{proposition}[equation]{Proposition}
\theoremstyle{remark}
\newtheorem{remark}[equation]{Remark}
\DeclareMathOperator{\fo}{{\mathfrak o}}
\newcommand{\abs}[1]{\lvert #1\rvert}
\renewcommand{\bar}[1]{#1\llap{$\overline{\phantom{\rm#1}}$}}
\begin{document}



\title{Uniform boundedness of $S$-units in arithmetic dynamics}

\author{H.~Krieger}
\address{
Holly Krieger\\
Department of Mathematics\\
Massachusetts Institute of Technology\\
77 Massachusetts Avenue\\
Cambridge, MA 02139\\
USA
}
\email{hkrieger@math.mit.edu}

\author{A.~Levin}
\address{
Aaron Levin
Department of Mathematics\\
Michigan State University\\
619 Red Cedar Road\\
East Lansing, MI 48824\\
USA
}
\email{adlevin@math.msu.edu}

\author{Z.~Scherr}
\address{
Zachary Scherr
Department of Mathematics\\
University of Pennsylvania\\
David Rittenhouse Lab\\
209 South 33rd Street\\
Philadelphia, PA 19104-6395\\
USA
}
\email{zscherr@math.upenn.edu}

\author{T.~J.~Tucker}
\address{
Thomas Tucker\\
Department of Mathematics\\
University of Rochester\\
Rochester, NY 14627\\
USA
}
\email{thomas.tucker@rochester.edu}

\author{Y.~Yasufuku}
\address{
Yu Yasufuku\\
Department of Mathematics\\
College of Science and Technology\\
Nihon University\\
1-8-14 Kanda-Surugadai\\
Chiyoda-ku Tokyo 101-8308\\
Japan
}
\email{yasufuku@math.cst.nihon-u.ac.jp}

\author{M.~E. Zieve}
\address{
Michael Zieve\\
  Department of Mathematics\\
  University of Michigan\\
  Ann Arbor, MI 48109--1043\\
  USA
}

\email{zieve@umich.edu}

\keywords {}
\subjclass{}
\thanks{The authors were partially supported by NSF grants
  DMS-1303770 (H.K.), DMS-1102563 (A.L.),
  DMS-1200749 (T.T.), and DMS-1162181 (M.Z.). The fifth author was partially supported by JSPS Grants-in-Aid 23740033.}


 \begin{abstract} Let $K$ be a number field and let $S$ be a finite set of places of $K$ which contains all the Archimedean places.
 For any $\phi(z)\in K(z)$ of degree $d\ge 2$ which is not a $d$-th power in $\bar{K}(z)$, Siegel's theorem implies that
the image set $\phi(K)$ contains only finitely many $S$-units.  We conjecture that
 the number of such $S$-units is bounded by a function of $\abs{S}$ and $d$ (independently of $K$ and $\phi$).  We prove this conjecture for several classes of rational functions, and show that the full conjecture follows from the Bombieri--Lang conjecture.
 \end{abstract}


\maketitle

\section{Introduction}
\label{intro}

Let $K$ be a number field, let $S$ be a finite set of places of $K$ which contains the set $S_{\infty}$
of Archimedean places of $K$, and write $\fo_S$ for the ring of $S$-integers of $K$ and
$\fo^*_S$ for the group of $S$-units of $K$.  The genus-$0$ case of Siegel's theorem asserts that,
for any $\phi(z)\in K(z)$ which has at least three poles in $\mathbb{P}^1(\bar K)$, the image set
$\phi(K)$ contains only finitely many $S$-integers.  However, the number of $S$-integers in $\phi(K)$
cannot be bounded independently of $\phi(z)$, even if we restrict to functions $\phi(z)$ having a fixed degree,
since $\psi(z):=\beta\phi(z/\beta)$ satisfies $\psi(K)=\beta\phi(K)$ for any $\beta\in K^*$.

Although the number of $S$-integers in $\phi(K)$ cannot be bounded in terms of only $K$, $S$,
and $\deg(\phi)$, such a bound may be possible for the number of $S$-units in $\phi(K)$.
In fact we conjecture that there is a bound depending only on $\abs{S}$ and $\deg(\phi)$ (and not on $K$):

\begin{conjecture} \label{generalmainconjecture} For any integers $s\ge 1$ and $d\ge 2$, there is a constant
$C = C(s, d)$ such that for any
\begin{itemize}
\item  number field $K$
\item $s$-element set $S$ of places of $K$ with $S\supseteq S_\infty$
\item degree-$d$ rational function $\phi(z) \in K(z)$ which is not a $d$-th power in $\bar{K}(z)$
\end{itemize}
%
%
we have
\[
\abs{\phi(K)\cap\fo^*_S}\le C.
  \]
\end{conjecture}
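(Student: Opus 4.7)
The plan is to deduce the conjecture from the Bombieri--Lang conjecture via the Caporaso--Harris--Mazur uniform bound on rational points of curves, combined with a pigeonhole on $k$-th power classes in $\fo_S^*$. A key observation removing the $K$-dependence is that $S\supseteq S_\infty$ forces $[K:\bQ]\le 2s$, so any bound in terms of $[K:\bQ]$ automatically becomes one in terms of $s$.

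First I would fix a prime $k$ with $d<k\le 2d$ (Bertrand's postulate). A short ramification check shows $\fo_S^*\cap(K^*)^k=(\fo_S^*)^k$, so $\fo_S^*/(\fo_S^*)^k$ has order at most $k^s$. By pigeonhole, some coset $c(K^*)^k$ with $c\in\fo_S^*$ meets $\phi(K)\cap\fo_S^*$ in at least $\abs{\phi(K)\cap\fo_S^*}/k^s$ units $u_i$; for each $u_i$, picking a preimage $a_i\in K$ and a $k$-th root $w_i\in K^*$ of $u_i/c$ exhibits that many distinct $K$-rational points $(a_i,w_i)$ on the smooth projective model $Y$ of the affine curve $y^k=\phi(z)/c$.

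Next I would analyze the geometry of $Y$. Since $k>d=\deg(\phi/c)$, the function $\phi/c$ cannot be an $\ell$-th power in $\bar{K}(z)$ for any $\ell\mid k$ with $\ell>1$, so $Y$ is geometrically irreducible. Since $k$ is prime and exceeds every ramification index of $\phi$ (each bounded by $d$), the cover $Y\to\mathbb{P}^1_z$ is totally ramified of index $k$ at each geometric preimage in $\phi^{-1}(\{0,\infty\})$ and unramified elsewhere. The hypothesis that $\phi$ is not a $d$-th power forces $m:=\abs{\phi^{-1}(\{0,\infty\})}\ge 3$, so Riemann--Hurwitz gives $g(Y)=1+(k-1)(m-2)/2$; in particular $g(Y)\ge 2$ and $g(Y)$ is bounded above by an explicit function of $d$.

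Finally, the Caporaso--Harris--Mazur theorem provides, conditional on Bombieri--Lang, a uniform bound $B(g,n)$ on the number of $L$-rational points of any smooth projective curve of genus $g$ defined over a number field $L$ with $[L:\bQ]\le n$. Applying this with $n=2s$ and $g=g(Y)$ yields $\abs{\phi(K)\cap\fo_S^*}\le k^s B(g(Y),2s)$, a constant depending only on $s$ and $d$. The main obstacle is the invocation of Caporaso--Harris--Mazur itself; once one grants their conditional uniform bound for bounded-genus curves over bounded-degree number fields, the reduction above is geometrically routine, and the remaining combinatorial care is only in ensuring that the pigeonhole and Riemann--Hurwitz constants depend on $s$ and $d$ alone.
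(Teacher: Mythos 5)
Your approach is the same as the paper's proof of Theorem~\ref{generalfollowsfromCHM} (via Lemma~\ref{curves}): split $\fo_S^*$ into $k$-th power classes, pass to the superelliptic curves $y^k=\phi(z)/c$, bound their genus, and invoke Caporaso--Harris--Mazur together with $[K:\bQ]\le 2s$. All of that is sound, but there is one genuine gap: your genus formula is off by one, and the error hides a case where the argument actually fails. For $y^k=\phi(z)/c$ with $k$ prime, $k>d$, and $m$ the number of zeroes and poles of $\phi$ in $\mathbb{P}^1(\bar K)$, Riemann--Hurwitz gives $2g-2=-2k+m(k-1)$, i.e.\ $g=(k-1)(m-2)/2$, not $1+(k-1)(m-2)/2$ (sanity check: $y^2=f(z)$ with $m=2g+2$ branch points has genus $g$, not $g+1$). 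With the correct formula, your claim that $g\ge 2$ always holds is false: when $d=2$ and $m=3$ (e.g.\ $\phi(z)=z(z-1)$), your choice forces $k=3$ and gives $g=1$. A genus-one curve can have infinitely many $K$-rational points, and Caporaso--Harris--Mazur says nothing about it, so the proof breaks exactly there.

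The fix is small and is what the paper does: choose the prime $p$ to satisfy both $p>d$ and $(p-1)(m-2)>2$, which means taking $p=5$ (rather than $3$) in the case $d=2$, $m=3$, and which still keeps $p$ bounded by an explicit function of $d$ (namely $p<2d$ in all other cases by Bertrand, and $p=5$ in the exceptional one), so the count $p^{\abs{S}}$ of power classes and the genus bound remain functions of $s$ and $d$ alone. Everything else in your write-up --- the identity $\fo_S^*\cap(K^*)^k=(\fo_S^*)^k$, the bound $\abs{\fo_S^*/(\fo_S^*)^k}\le k^s$, the irreducibility of $y^k=\phi(z)/c$ from $k>d$, the deduction $m\ge 3$ from $\phi$ not being a $d$-th power in $\bar K(z)$, and the use of $[K:\bQ]\le 2s$ --- matches the paper and is correct.
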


We will prove Conjecture~\ref{generalmainconjecture} in case $\phi(z)$ is restricted to certain classes of
rational functions, and we will also prove that the full conjecture is a consequence of a variant of the Caporaso--Harris--Mazur
conjecture on uniform boundedness of rational points on curves of fixed genus.

We also consider a variant of Conjecture~\ref{generalmainconjecture}, which addresses $S$-units in an orbit of $\phi$
rather than in the image set $\phi(K)$.  Here, for any $\alpha\in\mathbb{P}^1(K)$, the \emph{orbit} of $\alpha$
under $\phi(z)$ is the set
\[
 \mathcal{O}_{\phi}(\alpha) := \{ \phi^n(\alpha): n \geq 1 \},
 \]
where $\phi^n(z) = \phi \circ \cdots \circ \phi$ denotes the $n$-fold composition of $\phi$ with itself.
For any $\phi(z)\in K(z)$ of degree at least $2$ such that $\phi^2(z)\notin K[z]$, Silverman~\cite{Silverman}
showed that $\mathcal{O}_{\phi}(\alpha)\cap\fo_S$ is finite.
However, as above, the size of this intersection cannot be bounded in terms of $K$, $S$, and $\deg(\phi)$.
We conjecture that there is a uniform bound on the number of $S$-units in an orbit:

\begin{conjecture} \label{mainconjecture} For any integers $s\ge 1$ and $d\ge 2$, there is a constant
$C = C(s, d)$ such that for any
\begin{itemize}
\item  number field $K$
\item $s$-element set $S$ of places of $K$ with $S\supseteq S_\infty$
\item degree-$d$ rational function $\phi(z) \in K(z)$ which is not of the form $\beta z^{\pm d}$ with $\beta\in K^*$
\item $\alpha \in \mathbb{P}^1(K)$
\end{itemize}
%
%
we have
\[
 \abs{\mathcal{O}_{\phi}(\alpha) \cap \fo^*_S} \le C.
 \]
\end{conjecture}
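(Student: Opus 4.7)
The plan is to reduce Conjecture~\ref{mainconjecture} to Conjecture~\ref{generalmainconjecture} via the elementary observation that $\phi^n(\alpha)=\phi(\phi^{n-1}(\alpha))\in\phi(K)$ for every $n\geq 1$, so $\mathcal{O}_\phi(\alpha)\subseteq\phi(K)$. When $\phi$ satisfies the hypothesis of Conjecture~\ref{generalmainconjecture}, namely that $\phi$ is not a $d$-th power in $\bar K(z)$, one obtains immediately
\[
|\mathcal{O}_\phi(\alpha)\cap\fo^*_S|\leq |\phi(K)\cap\fo^*_S|\leq C_1(s,d),
\]
where $C_1$ is the constant from Conjecture~\ref{generalmainconjecture}.

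The remaining case is that $\phi$ is a $d$-th power in $\bar K(z)$ yet is not of the form $\beta z^{\pm d}$. Since $\deg\phi=d$, being a $d$-th power forces $\phi$ to have a single zero $a$ and a single pole $b$ in $\mathbb{P}^1(\bar K)$, each of multiplicity $d$; Galois-invariance of the zero/pole divisor then places $a,b\in\mathbb{P}^1(K)$, so over $K$ we have $\phi(z)=c\bigl((z-a)/(z-b)\bigr)^d$ for some $c\in K^*$ (with the obvious reinterpretation when $a$ or $b$ equals $\infty$). By hypothesis $\{a,b\}\neq\{0,\infty\}$, and a short case-check shows that at least one of $a,b$ is a nonzero finite point of $K$. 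The plan is then to apply Conjecture~\ref{generalmainconjecture} to $\phi^2$, which has degree $d^2$.

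For this one must verify that $\phi^2$ is not itself a $d^2$-th power in $\bar K(z)$. This is a divisor computation on $\mathbb{P}^1$: the zero divisor of $\phi^2$ is the pullback under $\phi$ of $[a]$, i.e.\ the roots of $c(z-a)^d=a(z-b)^d$, and similarly the pole divisor is the pullback of $[b]$. When $a\neq 0$ the first equation has $d$ distinct roots, and when $b\neq 0$ the second does; since at least one of $a,b$ is nonzero, the divisor of $\phi^2$ contains a coefficient of $\pm 1$, which is not divisible by $d\geq 2$. Hence $\phi^2$ is not a $d^2$-th power. Noting that $\{\phi^n(\alpha):n\geq 2\}\subseteq\phi^2(K)$, one obtains
\[
|\mathcal{O}_\phi(\alpha)\cap\fo^*_S|\leq 1+|\phi^2(K)\cap\fo^*_S|\leq 1+C_1(s,d^2),
\]
where the extra $+1$ accounts for $\phi(\alpha)$. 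Taking $C(s,d):=\max\{C_1(s,d),\,1+C_1(s,d^2)\}$ completes the derivation.

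The genuine obstacle lies not in this reduction but in Conjecture~\ref{generalmainconjecture} itself: any unconditional or Bombieri--Lang-conditional bound for the orbit version ultimately rests on first establishing the image version. One should also be careful of the degenerate configurations $a=\infty$ or $b=\infty$ (where $\phi$ is essentially a polynomial or the reciprocal of one); the same divisor argument goes through, but must be phrased using projective rather than affine coordinates.
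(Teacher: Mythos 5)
Your reduction is correct and is essentially the paper's own argument (Remark~\ref{conjcomp}): both deduce Conjecture~\ref{mainconjecture} from Conjecture~\ref{generalmainconjecture} by passing to $\phi^2$ and checking it is not a $d^2$-th power, the paper doing this uniformly via Lemma~\ref{poles} (Riemann--Hurwitz), while you handle the only nontrivial case --- $\phi$ itself a $d$-th power --- by an explicit divisor computation that amounts to the same ramification count. The resulting bound $1+C_1(s,d^2)$, and your observation that the remaining content lives entirely in Conjecture~\ref{generalmainconjecture}, match the paper.
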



\begin{remark}\label{conjcomp}
We note that Conjecture~\ref{mainconjecture} follows from Conjecture~\ref{generalmainconjecture}.
For, if $\phi(z)\ne\beta z^{\pm d}$ then $\phi^2(z)$ has a total of at least three zeroes and poles by Lemma~\ref{poles},
and hence is not
a $d^2$-th power in $\bar K(z)$.  Thus Conjecture~\ref{generalmainconjecture} implies that $\abs{\phi^2(K)\cap\fo_S^*}\le C(s,d)$, so that $\abs{\mathcal{O}_{\phi}(\alpha)\cap\fo_S^*}\le C(s,d)+1$.
\end{remark}

\begin{remark}\label{bounddeg}
The hypotheses of Conjectures~\ref{generalmainconjecture} and \ref{mainconjecture} imply that $[K:\mathbb{Q}]\le 2s$, since $S_{\infty}\subseteq S$.
\end{remark}

In Section~\ref{evidence} we prove the following results, which
show that Conjectures~\ref{generalmainconjecture} and \ref{mainconjecture}
 would be true if we allowed the constants $C$ in the conjectures to depend on $S$ and $\phi$ rather than
just $s$ and $d$.  These results also indicate the special behavior of the functions excluded in the statements of the
conjectures.

\begin{proposition} \label{generalfiniteness} Let $K$ be a number field, let $S$ be a finite set of places of $K$ with $S\supseteq S_{\infty}$,
and let $\phi(z)\in K(z)$ be any rational function.  If $\abs{\phi^{-1}(\{0,\infty\})}\ne 2$ then $\phi(K)\cap\fo_S^*$ is finite.
If $\abs{\phi^{-1}(\{0,\infty\})}=2$ then there is a finite set $S'\supseteq S$
for which $\phi(K)\cap\fo_{S'}^*$ is infinite.
\end{proposition}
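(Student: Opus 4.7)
Setting $T := \phi^{-1}(\{0,\infty\})\subseteq \mathbb{P}^1(\bar K)$, my plan is to handle the cases $|T|\ne 2$ and $|T|=2$ by distinct arguments. When $\phi$ is constant the image has at most one element and both halves of the statement are immediate, so I assume $\phi$ nonconstant; then $\phi$ has both zeros and poles in $\mathbb{P}^1(\bar K)$ and so $|T|\ge 2$, reducing the first case to $|T|\ge 3$.

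For the finiteness half, I will reduce to Siegel's theorem on $\mathbb{P}^1$ minus a finite set. Let $L=K(T)$ and factor $\phi(z)=c\prod_{t\in T\cap\mathbb{A}^1}(z-t)^{m_t}$ with $m_t=\operatorname{ord}_t(\phi)\in\mathbb{Z}\setminus\{0\}$; enlarge the set $S_L$ of places of $L$ above $S$ to a finite set $S'$ containing every place at which $c$ or some difference $t_i-t_j$ (with $t_i\ne t_j$ in $T$) fails to be a unit. The heart of the reduction is the following local observation: for $\alpha\in K$ with $\phi(\alpha)\in\fo_S^*$ and $w\notin S'$, the identity $0=w(\phi(\alpha))=\sum_{t}m_t\, w(\alpha-t)$ holds, while the ultrametric inequality together with $w(t_i-t_j)=0$ permits at most one $t\in T$ to satisfy $w(\alpha-t)>0$; that single term would then have to vanish by itself, so in fact $w(\alpha-t)\le 0$ for every $t\in T$. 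Thus $\alpha$ is an $(S',T)$-integral $L$-point of $\mathbb{P}^1$, and since $|T|\ge 3$ Siegel's theorem bounds the number of such $\alpha\in L$, hence in $K$, hence the number of $S$-units in $\phi(K)$.

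For the case $|T|=2$, the divisors of zeros and of poles of $\phi$ are each $K$-rational and each supported at a single point, forcing that point to lie in $\mathbb{P}^1(K)$. After an affine change of variable on the source and (if needed) replacing $\phi$ by $1/\phi$, I may assume $\phi(z)=c\bigl((z-a)/(z-b)\bigr)^{d}$ with $c\in K^*$ and distinct $a,b\in K$ (the cases $a=\infty$ or $b=\infty$, i.e.\ $\phi$ polynomial or reciprocal polynomial, are analogous). Enlarge $S$ to $S'$ so that $c$ and $a-b$ lie in $\fo_{S'}^*$ and $|S'|\ge 2$, guaranteeing via Dirichlet's unit theorem that $\fo_{S'}^*$ is infinite. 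For each $\beta\in\fo_{S'}^*\setminus\{1\}$ the point $\alpha=(b\beta-a)/(\beta-1)\in K$ satisfies $(\alpha-a)/(\alpha-b)=\beta$, so $\phi(\alpha)=c\beta^d\in\fo_{S'}^*$; since $c(\fo_{S'}^*)^d$ is a coset of a finite-index subgroup of the infinite group $\fo_{S'}^*$, this yields infinitely many distinct $S'$-units in $\phi(K)$.

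The main obstacle will be the Siegel reduction in the first half: both the ultrametric computation establishing $(S',T)$-integrality and the minor adjustment needed when $\infty\in T$ (where the factorization of $\phi$ and the local analysis at $\infty$ must be modified) demand care. The remaining steps, including the explicit parametrization in the $|T|=2$ case, are essentially formal.
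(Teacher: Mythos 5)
Your proof is correct, and the second half (the $\abs{T}=2$ case) is essentially identical to the paper's: both identify $\phi$ as $\gamma\mu(z)^d$ with $\mu$ of degree one defined over $K$, enlarge $S$ so that $\gamma$ is a unit and $\fo_{S'}^*$ is infinite, and pull back units through $\mu$. The first half, however, takes a genuinely different route. You carry out the classical valuation-theoretic reduction: after enlarging $S$ over $L=K(T)$, you show that any $\alpha$ with $\phi(\alpha)\in\fo_S^*$ is an $(S',T)$-integral point of $\mathbb{P}^1\setminus T$, and invoke Siegel's theorem for $\abs{T}\ge 3$. The paper instead uses a one-line trick: $\psi(z):=\phi(z)+1/\phi(z)$ has at least three poles whenever $\abs{T}\ge 3$, and $\psi(\beta)\in\fo_S$ whenever $\phi(\beta)\in\fo_S^*$, so the form of Siegel's theorem already quoted in the introduction (finitely many $S$-integers in the image of a function with $\ge 3$ poles) applies directly over $K$, with no field extension, no enlargement of $S$, and no local analysis. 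Your approach buys self-containedness and makes the integrality mechanism explicit; the paper's buys brevity. One small point of care beyond those you flag: requiring only that the differences $t_i-t_j$ be $S'$-units does not by itself force the $t_i$ to be $w$-integral outside $S'$ (all of them could share a common negative valuation), so you should also enlarge $S'$ to make each finite $t_i$ integral, or note that your ultrametric argument in fact never uses integrality of the $t_i$ themselves, only $w(t_i-t_j)=0$. This is routine and does not affect correctness.
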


\begin{proposition} \label{finiteness}
Let $K$ be a number field, let $S$ be a finite set of places of $K$ with $S\supseteq S_{\infty}$, and let $\phi(z)\in K(z)$
have degree $d\ge 2$.
If $\phi(z)$ does not have the form $\beta z^{\pm d}$
with $\beta\in K^*$, then
there is a constant $C(S,\phi)$ such that every $\alpha\in\mathbb{P}^1(K)$ satisfies
$\abs{\mathcal{O}_{\phi}(\alpha)\cap\fo^*_S} \le C(S,\phi)$.
Conversely, if $\phi(z)=\beta z^{\pm d}$ with $\beta\in K^*$  then there exist $\alpha\in\mathbb{P}^1(K)$ and
a finite set $S\supseteq S_{\infty}$ for which
$\mathcal{O}_{\phi}(\alpha)\cap\fo^*_S$ is infinite.
\end{proposition}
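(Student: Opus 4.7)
The plan splits along the two cases of the statement. For the main direction, assume $\phi(z) \neq \beta z^{\pm d}$. The crucial observation is that every orbit element $\phi^n(\alpha)$ with $n \geq 2$ lies in the image set $\phi^2(K)$, since $\phi^n(\alpha) = \phi^2(\phi^{n-2}(\alpha))$ with $\phi^{n-2}(\alpha) \in K$. Hence if $\phi^2(K) \cap \fo_S^*$ is a finite set of cardinality $N$, the distinct $S$-units in $\mathcal{O}_{\phi}(\alpha)$ number at most $N + 1$, the extra $+1$ accounting for the possibility that $\phi(\alpha)$ itself is an $S$-unit not of the form $\phi^2(\gamma)$. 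This bound is independent of $\alpha$, so taking $C(S,\phi) := N + 1$ suffices.

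To establish finiteness of $\phi^2(K) \cap \fo_S^*$, I would apply Proposition~\ref{generalfiniteness} to the rational function $\phi^2(z)$. The hypothesis $\phi \neq \beta z^{\pm d}$ together with Lemma~\ref{poles} (exactly as invoked in Remark~\ref{conjcomp}) guarantees that $\phi^2$ has a total of at least three zeros and poles in $\mathbb{P}^1(\bar{K})$, so $\abs{\phi^{-2}(\{0,\infty\})} \geq 3$, and in particular is not $2$. Proposition~\ref{generalfiniteness} then delivers the required finiteness. Since that proposition ultimately rests on Siegel's theorem, this is where the real content sits; the rest of the first half is a packaging argument and I expect no serious obstacle.

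For the converse, suppose $\phi(z) = \beta z^{\pm d}$ with $\beta \in K^*$. Pick $\alpha \in K^*$ to be any element that is not a root of unity (for example $\alpha = 2$), and let $S \supseteq S_\infty$ be a finite set of places containing all those at which $\beta$ or $\alpha$ fails to be a unit. A direct induction shows that each iterate $\phi^n(\alpha)$ is a monomial of the form $\beta^{a_n}\alpha^{b_n}$ with integer exponents, and hence lies in $\fo_S^*$. Because $\alpha$ is not a root of unity and the exponents $b_n$ grow rapidly in absolute value (for instance $b_n = d^n$ in the $+d$ case), the iterates have archimedean absolute values tending to infinity, or to zero, at a suitable place; in particular they are pairwise distinct. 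Thus $\mathcal{O}_{\phi}(\alpha) \cap \fo_S^*$ is infinite, completing the converse.
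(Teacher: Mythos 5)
Your argument is essentially identical to the paper's proof: the forward direction bounds $\abs{\mathcal{O}_{\phi}(\alpha)\cap\fo_S^*}$ by $\abs{\phi^2(K)\cap\fo_S^*}+1$ and gets finiteness from Proposition~\ref{generalfiniteness} applied to $\phi^2$ via Lemma~\ref{poles}, and the converse enlarges $S$ so that $\beta$ and $\alpha$ are $S$-units, exactly as in the paper. One small caveat (whose analogue also appears in the paper's own wording): neither ``$\alpha$ is not a root of unity'' nor the specific choice $\alpha=2$ guarantees that the orbit is infinite --- for instance $\phi(z)=z^2/2$ fixes $2$ and $\phi(z)=2z^2$ fixes $1/2$, which makes your ``absolute values tend to $0$ or $\infty$ at a suitable place'' step fail --- so one should instead choose $\alpha\in K^*$ outside the finite (by Northcott) set of preperiodic points of $\phi$ in $\mathbb{P}^1(K)$.
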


We note that the hard portions of these propositions are immediate consequences of Siegel's theorem.
For, if $\abs{\phi^{-1}(\{0,\infty\})}>2$ then $\psi(z):=\phi(z)+1/\phi(z)$ has at least three poles so that $\psi(K)\cap\fo_S$ is
finite; but $\psi(\beta)$ is in $\fo_S$
whenever $\phi(\beta)$ is in $\fo_S^*$, so also $\phi(K)\cap\fo_S^*$ is finite.
  Next, if $\phi^{-1}(\{0,\infty\})$ is a two-element set other than $\{0,\infty\}$, then
Lemma~\ref{poles} implies that $\abs{\phi^{-2}(\{0,\infty\})}>2$, so that $\phi^2(K)\cap\fo_S^*$ has size $N<\infty$, whence
$\abs{\mathcal{O}_{\phi}(\alpha)\cap\fo_S^*}\le N+1=C(S,\phi)$.

In Section \ref{results} we prove Conjectures~\ref{generalmainconjecture} and \ref{mainconjecture} for some families of polynomial maps.
The first family consists of monic polynomials in $\fo_S[z]$:

\begin{theorem} \label{monicpoly} Let $s\ge 1$ and $d\ge 2$ be integers.  There is a constant $C = C(s, d)$ such that
for any
\begin{itemize}
\item  number field $K$
\item $s$-element set $S$ of places of $K$ with $S\supseteq S_\infty$
\item degree-$d$ monic polynomial $\phi(z) \in \fo_S[z]$ which does not equal $(z-\beta)^d$ for any $\beta\in K$
\end{itemize}
%
%
we have
\[
\abs{\phi(K)\cap \fo^*_S} \le C.
\]
\end{theorem}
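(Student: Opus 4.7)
The plan is to convert the condition $\phi(\alpha) \in \fo_S^*$ into a two-term $S$-unit equation with fixed nonzero right-hand side over the splitting field of $\phi$, and then to invoke the uniform Evertse--Beukers--Schlickewei bound on solutions of such equations, whose essential feature is that the bound does not depend on the coefficients of the equation.

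Factor $\phi(z) = \prod_{i=1}^d (z-\beta_i)$ in $\bar{K}[z]$, and let $L = K(\beta_1,\ldots,\beta_d)$, so that $[L:K] \le d!$ and, using Remark~\ref{bounddeg}, $[L:\bQ] \le 2s\cdot d!$. Let $T$ be the set of places of $L$ lying above $S$, so $|T| \le s\cdot d!$. Because $\phi$ is monic with coefficients in $\fo_S$, each root $\beta_i$ is integral over $\fo_S$, hence lies in $\fo_T$.

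Now take any $\alpha \in K$ with $\phi(\alpha) \in \fo_S^*$. My first observation is that $\alpha \in \fo_S$: indeed, $\alpha$ is a root of the monic polynomial $\phi(z)-\phi(\alpha) \in \fo_S[z]$, so $\alpha$ is integral over $\fo_S$. Consequently each $\alpha - \beta_i$ lies in $\fo_T$, and
\[
\prod_{i=1}^d (\alpha - \beta_i) \;=\; \phi(\alpha) \;\in\; \fo_S^* \;\subseteq\; \fo_T^{*},
\]
so each individual factor $\alpha-\beta_i$ is itself a $T$-unit.

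Since $\phi(z) \neq (z-\beta)^d$ for any $\beta \in K$, I can choose indices $i \ne j$ with $\beta_i \ne \beta_j$. Then the identity
\[
(\alpha-\beta_j) - (\alpha-\beta_i) \;=\; \beta_i - \beta_j
\]
is a two-variable $S$-unit equation in $\alpha-\beta_i,\;\alpha-\beta_j \in \fo_T^*$ with fixed nonzero right-hand side in $L$. The Evertse--Beukers--Schlickewei bound gives a count of solution pairs $(\alpha-\beta_i,\,\alpha-\beta_j) \in (\fo_T^*)^2$ depending only on $|T|$ and $[L:\bQ]$, and hence — via the inequalities above — only on $s$ and $d$. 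Each such pair determines $\alpha$, and hence $\phi(\alpha)$, yielding the desired bound $|\phi(K)\cap\fo_S^*| \le C(s,d)$.

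The real content of the argument is the uniform-in-coefficients feature of the two-term $S$-unit equation bound, which is precisely what makes the count independent of the particular polynomial $\phi$. Without it, one would be forced to enlarge $T$ to absorb the primes dividing $\beta_i - \beta_j$, reintroducing an uncontrolled dependence on $\phi$. The monicity hypothesis plays a parallel role in a second way: it is what lets us conclude $\alpha \in \fo_S$, so that the factors $\alpha - \beta_i$ become $T$-integers (and then $T$-units) without any further enlargement of $T$.
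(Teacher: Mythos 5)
Your proposal is correct and follows essentially the same route as the paper's proof: reduce to the two-term $S$-unit equation $(\alpha-\beta_j)-(\alpha-\beta_i)=\beta_i-\beta_j$ over an extension where two distinct roots live, and invoke the coefficient-uniform Evertse--Beukers--Schlickewei bound. The only (immaterial) difference is that you pass to the full splitting field with $[L:K]\le d!$, whereas the paper adjoins just two distinct roots, giving $[K':K]\le d(d-1)$ and hence a smaller explicit constant.
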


Theorem~\ref{monicpoly} proves Conjecture~\ref{generalmainconjecture} for monic polynomials in $\fo_S[z]$; for such
 polynomials, Conjecture~\ref{mainconjecture} follows by applying Theorem~\ref{monicpoly} to $\phi^2(z)$.

We also prove Conjecture~\ref{mainconjecture} for monic polynomials in $K[z]$ in which the coefficients of all but one term are
in $\fo_S$, so long as this exceptional term does not have degree $d-1$.

\begin{theorem} \label{unicritical} Let $K$ be a number field, and let $S$ be a finite set of places of $K$ with
$S\supseteq S_{\infty}$.  For any monic $\phi_0(z)\in\fo_S[z]$, and any $\beta\in K\setminus\fo_S$ and
$0\le i<\deg(\phi_0)-1$, the polynomial $\phi(z):=\phi_0(z)+\beta z^i$ satisfies
\[
\abs{\mathcal{O}_{\phi}(\alpha)\cap  \fo^*_S} \le 1
\]
for any $\alpha\in K$.
\end{theorem}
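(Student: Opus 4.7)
The plan is to exhibit a single non-archimedean place outside $S$ at which the orbit is forced to leave $\fo_S$ after one iterate and remain outside $\fo_S$ forever thereafter. Since $\beta\notin\fo_S$, there exists a finite place $v\notin S$ with $m:=-v(\beta)>0$. Suppose towards a contradiction that two distinct iterates $\phi^n(\alpha),\phi^{n'}(\alpha)$ with $n<n'$ both lie in $\fo_S^*$, and set $\gamma=\phi^n(\alpha)\in\fo_S^*$ and $k=n'-n\ge 1$. It suffices to show $v(\phi^j(\gamma))<0$ for every $j\ge 1$, which contradicts $\phi^k(\gamma)\in\fo_S^*$.

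For the base case $j=1$: since $\phi_0\in\fo_S[z]$ and $\gamma\in\fo_S$, one has $v(\phi_0(\gamma))\ge 0$, while $v(\beta\gamma^i)=v(\beta)=-m$ because $\gamma\in\fo_S^*$ makes $v(\gamma)=0$. The ultrametric inequality then forces $v(\phi(\gamma))=-m<0$.

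For the inductive step, I would prove the following: if $\eta\in K$ satisfies $v(\eta)=-N$ with $N\ge m$, then $v(\phi(\eta))=-dN$ where $d:=\deg(\phi_0)$. Indeed, since $\phi_0$ is monic and all its coefficients are in $\fo_S$, the leading term dominates: $v(\phi_0(\eta))=-dN$. Meanwhile $v(\beta\eta^i)=-m-iN$, and the hypothesis $i\le d-2$ gives $(d-i)N\ge 2N\ge 2m>m$, equivalently $-dN<-m-iN$, so again the ultrametric inequality yields $v(\phi(\eta))=-dN$. Iterating from $v(\phi(\gamma))=-m$ produces $v(\phi^j(\gamma))=-d^{j-1}m$ for all $j\ge 1$, which is strictly negative and in particular prevents $\phi^j(\gamma)$ from being an $S$-unit.

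The argument is essentially a single valuation computation. The only thing that has to be checked carefully is the strict domination of the $z^d$ term over the $\beta z^i$ perturbation, which is exactly where the hypothesis $i<\deg(\phi_0)-1$ enters; without it (for instance if $i=d-1$) the two terms could have equal valuation and cancellation would be possible, so this is the only subtle point. Note that neither $s$, the specific $\alpha$, nor the choice of $\phi_0$ plays any role beyond the fixed place $v$, so the bound obtained is the sharp constant $1$.
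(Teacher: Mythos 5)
Your proof is correct and follows essentially the same route as the paper: choose a non-Archimedean place $v\notin S$ where $\beta$ has negative valuation, note that the first $S$-unit in the orbit is forced to map to something with $v$-valuation $v(\beta)<0$, and then show by induction that the valuation decays like $-d^{j-1}m$, using $i\le d-2$ exactly where the paper does to make the monic leading term dominate the $\beta z^i$ perturbation. The only cosmetic differences are that you work with additive valuations rather than absolute values and phrase the conclusion as a contradiction between two units rather than directly bounding the count by one.
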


Conjecture~\ref{mainconjecture} follows from \cite[Thm.~2]{Levin} for rational functions of the form
\[
\phi(z):=\frac{z^d+\beta_{d-1}z^{d-1}+\cdots+\beta_1z}{\gamma_{d-1}z^{d-1}+\gamma_{d-2}z^{d-2}+\cdots+\gamma_1z+1}
\]
with $\beta_1,\dots,\beta_{d-1},\gamma_1,\dots,\gamma_{d-1}\in\fo_S$ and $\phi(z)\ne z^d$.
For, \cite[Thm.~2]{Levin} gives a uniform bound on the number of elements of $K$ in the backwards orbit of
any element of $\fo^*_S$.  This also bounds the number of $S$-units in $\mathcal{O}_{\phi}(\alpha)$ for any $\alpha\in K$, since if
$\phi^n(\alpha)\in\fo^*_S$ then $\alpha,\phi(\alpha),\dots,\phi^{n-1}(\alpha)$ are elements of $K$ in the backwards
orbit of $\phi^n(\alpha)$.

We prove our conjectures for some further classes of rational functions in Section~\ref{extra}.

In Section~\ref{evidence} we show that our conjectures are
consequences of the following variant of the deep conjecture of
Caporaso--Harris--Mazur \cite{CHM} concerning rational points
 on curves of a fixed genus.

\begin{conjecture} \label{CHM} Fix integers $g \geq 2$ and $D \geq 1$.  There is a constant $N = N(D, g)$ such that
$\abs{X(K)}\le N$ for every smooth, projective, geometrically irreducible genus-$g$ curve
$X$ defined over a degree-$D$ number field $K$.
\end{conjecture}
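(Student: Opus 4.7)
The plan is to establish Conjecture~\ref{CHM} conditionally on the Bombieri--Lang conjecture, by extending the argument of Caporaso, Harris, and Mazur~\cite{CHM}. Their work shows, assuming Lang's conjecture that rational points of any variety of general type are not Zariski-dense, that $\abs{X(K)}$ admits a uniform bound as $X$ ranges over smooth projective genus-$g$ curves over a \emph{fixed} number field $K$. So the task splits into two parts: (i) recall their argument in a form suitable for use here, and (ii) adapt it to achieve uniformity across all number fields $K$ of bounded degree $D$.

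The heart of~\cite{CHM} is the observation that, for $N$ sufficiently large in terms of $g$, the $N$-fold fibered self-product $\mathcal{C}^N_g$ of the universal curve over $M_g$ is of general type (after a suitable desingularization), so Bombieri--Lang forces its rational points into a proper Zariski-closed subset $Z$. By Noetherian induction on dimension, applied to the irreducible components of $Z$ and to the analogous constructions in relative situations, one obtains the desired uniform bound $N(g)$ over a fixed field. To handle step~(ii) I would use Weil restriction: a $K$-point of $X$ corresponds to a $\mathbb{Q}$-point of $\mathrm{Res}_{K/\mathbb{Q}} X$, and one can assemble the restrictions corresponding to all degree-$D$ number fields into a single family over $\mathbb{Q}$ parametrized by the moduli space of degree-$D$ étale algebras. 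Applying the same general-type/Bombieri--Lang reasoning to this total space should yield a bound depending only on $D$ and $g$.

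The main obstacle is of course that the Bombieri--Lang conjecture is itself deeply open, so an unconditional proof is not realistic. Even granting it, verifying that $\mathcal{C}^N_g$ is of general type for a usable explicit $N = N(g)$ is nontrivial and relies on results of Logan and others on the Kodaira dimension of $\overline{M}_{g,N}$. The subtlest piece, absent from the original Caporaso--Harris--Mazur setup, is ensuring genuine uniformity in $D$: the Weil restriction depends on $K$, so one needs Bombieri--Lang applied uniformly across a family of restrictions, and the stratification produced by the induction must terminate in bounds independent of the particular $K$ encountered. This is where I would expect the bulk of the technical difficulty to concentrate.
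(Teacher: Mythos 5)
The statement you are asked about is a \emph{conjecture}, not a theorem: the paper does not prove it and does not claim to. The only thing the paper says about it is the one-line remark that Conjecture~\ref{CHM} follows from the Bombieri--Lang conjecture, with a citation to Pacelli's paper \emph{Uniform boundedness for rational points}, where that implication (uniformity in both the genus $g$ and the degree $D$ of the number field) is actually established. So there is no ``paper's own proof'' to compare against; your proposal should be judged as a sketch of the known conditional implication, which is indeed the correct framing --- you rightly recognize that an unconditional proof is out of reach and that the content is ``Bombieri--Lang implies uniform boundedness.''

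As a sketch of that implication, however, your degree-$D$ step has a real gap. The Caporaso--Harris--Mazur argument over a fixed $K$ already requires more than ``$\mathcal{C}^N_g$ is of general type'': one needs the fibered power theorem (that a sufficiently high fibered power of a family of general-type varieties dominates a variety of general type) together with a delicate induction on the exceptional locus. For varying $K$ of degree $D$, the Weil restriction $\mathrm{Res}_{K/\mathbb{Q}}X$ is a $D$-dimensional variety that genuinely depends on $K$, and there is no single family over $\mathbb{Q}$ into which all these restrictions assemble in the naive way you describe; moreover Bombieri--Lang applied to each restriction only gives non-density of rational points, and the exceptional Zariski-closed subsets vary with $K$, so no uniform bound falls out without substantial additional work. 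Pacelli's actual argument avoids Weil restriction entirely: she uses Abramovich's fibered power theorem applied to symmetric-product-type constructions, which convert a point of $X$ over a degree-$D$ field into a $\mathbb{Q}$-rational point of a fixed auxiliary variety, and then runs the CHM induction there. If you want to turn your sketch into a proof of the implication, that is the mechanism you would need; the Weil restriction route as stated does not close.
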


\begin{theorem} \label{generalfollowsfromCHM} If Conjecture~\ref{CHM} is true then Conjecture~\ref{generalmainconjecture}
and Conjecture~\ref{mainconjecture} are true.
\end{theorem}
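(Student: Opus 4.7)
By Remark~\ref{conjcomp} it suffices to deduce Conjecture~\ref{generalmainconjecture} from Conjecture~\ref{CHM}. The plan is the standard Kummer-plus-Riemann--Hurwitz device: partition $\fo_S^*$ modulo $N$-th powers so that each $\alpha\in\phi(K)\cap\fo_S^*$ produces a $K$-point on one of finitely many auxiliary curves of genus at least $2$, and then apply Conjecture~\ref{CHM} over $K$, whose degree is at most $2s$ by Remark~\ref{bounddeg}.

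Observe first that the hypothesis \emph{$\phi$ is not a $d$-th power in $\bar K(z)$} is equivalent to $k:=\abs{\phi^{-1}(\{0,\infty\})}\geq 3$: a single zero and single pole would each have multiplicity $d$, forcing $\phi=cL^d$ for any degree-one $L\in\bar K(z)$ with divisor $\phi^{-1}(0)-\phi^{-1}(\infty)$. Take $N$ to be the smallest prime exceeding $d$. Since $[K:\mathbb{Q}]\leq 2s$ bounds $\abs{\mu(K)}$ in terms of $s$, Dirichlet's $S$-unit theorem bounds $r:=\abs{\fo_S^*/(\fo_S^*)^N}$ by a function of $s$ and $d$ alone. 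Fix coset representatives $c_1,\dots,c_r\in\fo_S^*$, and for each $j$ let $X_j$ be the smooth projective model of the affine $K$-curve $C_j:y^N=\phi(x)/c_j$.

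For each $\alpha\in\phi(K)\cap\fo_S^*$ choose $\beta_\alpha\in K$ with $\phi(\beta_\alpha)=\alpha$, and write $\alpha=c_{j(\alpha)}u_\alpha^N$ with $u_\alpha\in\fo_S^*$. The assignment $\alpha\mapsto(\beta_\alpha,u_\alpha)\in C_{j(\alpha)}(K)$ is injective since $\alpha=c_{j(\alpha)}u_\alpha^N$ is recoverable from the image. Because $u_\alpha\neq 0$ and $\beta_\alpha$ is neither a zero nor a pole of $\phi$, the point $(\beta_\alpha,u_\alpha)$ lies in the smooth locus of $C_j$: writing $\phi=P/Q$ in lowest terms and $F(x,y)=y^NQ(x)-c_j^{-1}P(x)$, one has $\partial_yF(\beta_\alpha,u_\alpha)=Nu_\alpha^{N-1}Q(\beta_\alpha)\neq 0$. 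Such a smooth affine $K$-point lifts uniquely to a $K$-point of the normalization $X_j$, so
\[
\abs{\phi(K)\cap\fo_S^*}\leq\sum_{j=1}^r\abs{X_j(K)}.
\]

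It remains to certify that each $X_j$ is geometrically irreducible with $g(X_j)\geq 2$. Since $N>d$ is prime, $N$ is coprime both to every multiplicity $\abs{v_P(\phi)}\leq d$ and to the maximal $m$ with $\phi=R^m$ in $\bar K(z)$ (any such $m$ divides $d$). Kummer theory then gives irreducibility of $y^N-\phi(x)/c_j$ over $\bar K(x)$, while Riemann--Hurwitz for the degree-$N$ map $X_j\to\mathbb{P}^1_x$, ramified to index $N$ at each of the $k$ preimages of $\{0,\infty\}$, yields
\[
g(X_j)=\frac{(k-2)(N-1)}{2}\geq 2.
\]
Applying Conjecture~\ref{CHM} with $D=[K:\mathbb{Q}]\leq 2s$ then bounds each $\abs{X_j(K)}$ by a constant depending only on $s$ and $d$, and summing over $j\leq r$ delivers the desired bound $C(s,d)$. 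The principal care-point, which I would highlight as the main obstacle, is ensuring that the points we wish to count really do lift from the possibly singular affine $C_j$ to the smooth projective $X_j$; this is exactly where the condition $u_\alpha\neq 0$, automatic because we are counting $S$-units rather than $S$-integers, is essential.
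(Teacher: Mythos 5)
Your overall strategy is exactly the paper's: reduce to Conjecture~\ref{generalmainconjecture} via Remark~\ref{conjcomp}, split $\fo_S^*$ into boundedly many classes modulo $N$-th powers using Dirichlet's unit theorem, send each $S$-unit value of $\phi$ to a smooth point on one of the superelliptic curves $y^N=\phi(x)/c_j$, and apply Conjecture~\ref{CHM} after computing the genus by Riemann--Hurwitz. However, there is a genuine gap in your choice of $N$. You take $N$ to be the smallest prime exceeding $d$ and then assert $g(X_j)=(k-2)(N-1)/2\ge 2$, but this inequality fails when $d=2$ and $k=3$: then $N=3$ and $g=(1)(2)/2=1$. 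This case is not vacuous --- it occurs, for instance, for $\phi(z)=z^2-1$ (two simple zeroes and a double pole at $\infty$), and the resulting curves $y^3=(x^2-1)/c_j$ are genus-one curves, which may have infinitely many $K$-rational points. So in that case your argument does not bound $\abs{\phi(K)\cap\fo_S^*}$ at all, let alone uniformly.

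The repair is small and is precisely what the paper does: choose $p$ to be the smallest prime with both $p>d$ and $(p-1)(k-2)>2$, which forces $p=5$ in the exceptional case $d=2$, $k=3$ and leaves $p<2d$ (by Bertrand's Postulate) in all other cases, so the genus still stays bounded above by a function of $d$. With that one modification your argument is complete and coincides with the paper's proof; your explicit verification that the relevant affine points are nonsingular (via $\partial_y F\ne 0$, using $u_\alpha\ne 0$) and hence lift to the normalization is a correct elaboration of a step the paper states more briefly.
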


\begin{remark}
Conjecture~\ref{CHM} follows from the Bombieri--Lang conjecture~\cite{Pacelli}.
\end{remark}

We thank ICERM and the organizers of the 2012 ICERM workshop on Global Arithmetic Dynamics, where collaboration for this project began.

\section{Special classes of rational functions} \label{results}

In this section we prove Theorems~\ref{monicpoly} and \ref{unicritical}.

\begin{proof} [Proof of Theorem~\ref{monicpoly}] Let $K$ be a number
  field, let $S$ be a finite set of places of $K$ with $S\supseteq S_{\infty}$, and let
  $\phi(z)\in\fo_S[z]$ be monic of degree $d\ge 2$ with $\phi(z)\ne (z-\beta)^d$ for any $\beta\in K$.
Then $\phi(z)$ has at least two distinct roots $\delta_1,\delta_2$ in $\bar K$.
Let $K'=K(\delta_1,\delta_2)$ and let $S'$ be the set of places of $K'$ which lie over places in $S$, so that
$\abs{S'}\le [K':K]\abs{S}\le d(d-1)\abs{S}$ and $\delta_i\in\fo_{S'}$.
Then we can write
\[
\phi(z) = (z - \delta_1)(z-\delta_2) \psi(z),
\]
where $\psi(z)$ is a monic polynomial in $\fo_{S'}[z]$.
Let $\gamma\in K$ satisfy $\phi(\gamma)\in\fo_S^*$.  Then we must have $\gamma\in\fo_S$, so that
both $u_i := \gamma - \delta_i$ and $\psi(\gamma)$ are in $\fo_{S'}$.  Since $u_1u_2\psi(\gamma)=\phi(\gamma)$
is in $\fo_S^*$, it follows that $u_1,u_2\in\fo_{S'}^*$.  In addition we have
\begin{equation} \label{uniteqn}
\frac 1{\delta_2 - \delta_1} u_1 - \frac 1{\delta_2 - \delta_1} u_2 = 1.
\end{equation}
Moreover, $\gamma$ is uniquely determined by $u_1$, so the number of elements $\gamma\in\fo_{S}$ for which
$\phi(\gamma)\in\fo^*_{S}$ is at most the number of solutions to (\ref{uniteqn}) in elements $u_1,u_2\in\fo^*_{S'}$.
Finally, it is known that the number of such solutions is at most $C_1 C_2^{\abs{S'}-1}$ for some absolute
constants $C_1, C_2$
\cite{Evertse} (in fact, we can take $C_1=C_2=256$ \cite{Beukers-Sch}).  Therefore
$\abs{\phi(K)\cap\fo_S^*}$ is bounded by a function of $\abs{S'}$, and hence by a function of $\abs{S}$ and $d$.
\end{proof}


\begin{proof} [Proof of Theorem~\ref{unicritical}]
Let $v\notin S$ be a non-Archimedean place of $S$ such that $\abs{\beta}_v > 1$.
  Suppose that ${\mathcal O}_{\phi}(\alpha)$ contains an $S$-unit, and let $m$ be the least non-negative integer
  for which $\phi^m(\alpha)\in\fo_S^*$.  Writing $\gamma:=\phi^m(\alpha)$, we will show by induction that
  $\abs{\phi^n(\gamma)}_v =
  \abs{\beta}_v^{d^{n-1}}$ for every $n\ge 1$. The strong triangle inequality implies that
$
\abs{\phi(\gamma)}_v = \abs{\beta}_v,
$
proving the base case $n=1$.  If $\delta:=\phi^n(\gamma)$ satisfies
$\abs{\delta}_v = \abs{\beta}_v^{d^{n-1}}$ for some $n\ge 1$, then
$\abs{\phi_0(\delta)}_v=\abs{\beta}_v^{d^n}$ and $\abs{\beta\delta^i}_v=\abs{\beta}_v^{1+id^{n-1}}$;
our hypothesis $i<d-1$ implies that $d^n>1+id^{n-1}$, so that $\abs{\phi^{n+1}(\gamma)}_v=\abs{\beta}_v^{d^n}$,
which completes the induction.  It follows that $\phi^n(\gamma)\notin\fo_S$ for every $n>0$, so that
${\mathcal O}_{\phi}(\alpha)$ contains exactly one $S$-unit, which concludes the proof.
\end{proof}

\section{Connection with rational points on curves}
\label{evidence}

In this section we prove Theorem~\ref{generalfollowsfromCHM} and
Propositions~\ref{generalfiniteness} and \ref{finiteness}.
We begin by relating $S$-units in an orbit to rational points on certain curves.

\begin{lemma} \label{curves}
Let $K$ be a number field, let $S$ be a finite set of places of $K$ with $S\supseteq S_{\infty}$, and
let $\psi(z)\in K(z)$ be a nonconstant rational function.
For any prime $p$ with $p>\deg(\psi)$, there are elements $\gamma_1,\dots,\gamma_t\in\fo_S^*$, where $t\le p^{\abs{S}}$,
with the following properties:
\begin{itemize}
\item for each $i$, the affine curve $X_i$ defined by $y^p=\gamma_i\psi(z)$ is geometrically irreducible
\item we have $\abs{\psi(K)\cap\fo_S^*}\le\sum_{i=1}^t N_i$ where
$N_i$ is the number of points in $X_i(K)$ having nonzero $y$-coordinate.
\end{itemize}
\end{lemma}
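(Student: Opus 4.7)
The plan is a standard Kummer/twist construction. Each value $u = \psi(\alpha) \in \psi(K) \cap \fo_S^*$ determines a class in the finite group $\fo_S^*/(\fo_S^*)^p$; I will associate to each coset a twisted curve of the form $y^p = \gamma_i \psi(z)$, and construct an injection from $\psi(K) \cap \fo_S^*$ into $\bigsqcup_i \{(z,y) \in X_i(K) : y \ne 0\}$.

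First, by Dirichlet's $S$-unit theorem, $\fo_S^*$ is a finitely generated abelian group of free rank $\abs{S}-1$, so $\abs{\fo_S^*/(\fo_S^*)^p} \le p \cdot p^{\abs{S}-1} = p^{\abs{S}}$ (the first factor accounts for the torsion $\mu_K/\mu_K^p$). I would select $\gamma_1,\dots,\gamma_t \in \fo_S^*$ with $t \le p^{\abs{S}}$ such that every $u \in \fo_S^*$ satisfies $\gamma_i u \in (\fo_S^*)^p$ for exactly one index $i$; for instance, taking $\{\gamma_i^{-1}\}$ to be a system of coset representatives for $(\fo_S^*)^p$ in $\fo_S^*$ works.

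Next, I would verify geometric irreducibility of each $X_i \colon y^p = \gamma_i \psi(z)$. By Capelli's theorem (valid in characteristic zero for $p$ prime), the polynomial $y^p - \gamma_i\psi(z)$ is irreducible in $\bar K(z)[y]$ if and only if $\gamma_i\psi(z)$ is not a $p$-th power in $\bar K(z)^*$. If $\gamma_i\psi = f^p$ for some $f \in \bar K(z)^*$, then comparing orders at every point of $\mathbb{P}^1(\bar K)$ forces every zero and pole of $\psi$ to have multiplicity divisible by $p$; but each such multiplicity is at most $\deg(\psi) < p$, so all multiplicities vanish and $\psi$ is constant, a contradiction. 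This is the crux of the argument and the reason the hypothesis $p > \deg(\psi)$ is needed.

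Finally, I would assemble the injection. For each $u \in \psi(K)\cap\fo_S^*$, fix $\alpha_u \in K$ with $\psi(\alpha_u) = u$ and let $i(u)$ be the unique index for which $\gamma_{i(u)}u = v^p$ for some $v \in K^*$. Because $v^p = \gamma_{i(u)}u \in \fo_S^*$, the element $v$ is automatically an $S$-unit at every place outside $S$ and thus lies in $\fo_S^*$, so in particular $v \ne 0$; then $(\alpha_u, v)$ is a $K$-rational point of $X_{i(u)}$ with nonzero $y$-coordinate. The assignment $u \mapsto (\alpha_u, v)$ is injective because two distinct values $u_1 \ne u_2$ mapping to the same point must lie on the same $X_i$ and share the same $\alpha$-coordinate, forcing $u_1 = \psi(\alpha) = u_2$. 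Summing over $i$ yields $\abs{\psi(K)\cap\fo_S^*} \le \sum_i N_i$. The main obstacle is the irreducibility step; the remainder is essentially bookkeeping on top of Dirichlet's unit theorem.
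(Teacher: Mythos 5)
Your proposal is correct and follows essentially the same route as the paper: Dirichlet's $S$-unit theorem bounds $\abs{\fo_S^*/(\fo_S^*)^p}$ by $p^{\abs{S}}$, coset representatives give the twists $\gamma_i$, and each value in $\psi(K)\cap\fo_S^*$ yields a $K$-point with nonzero $y$-coordinate on exactly one $X_i$. The only difference is that you spell out the geometric irreducibility step (via Capelli and the order-counting argument using $p>\deg\psi$), which the paper asserts without detail; your justification is correct.
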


\begin{proof}
First note that $y^p=\gamma\psi(z)$ is geometrically irreducible for any $\gamma\in K^*$, since $\gamma\psi(z)$
is not a $p$-th power in $\bar{K}(z)$.
Dirichlet's $S$-unit theorem asserts that $\fo_S^*\cong\mu_K\times\mathbb{Z}^{\abs{S}-1}$, where
$\mu_K$ denotes the group of roots of unity in $K$.  Since $\mu_K$ is cyclic, it follows that
$\fo_S^*/(\fo_S^*)^p\cong (\mathbb{Z}/p\mathbb{Z})^r$ where $r\in\{\abs{S}-1,\abs{S}\}$.
Let $\Gamma$ be a set of $p^r$ elements in $\fo_S^*$ whose images in $\fo_S^*/(\fo_S^*)^p$ are pairwise distinct.
For any $\beta\in K$ such that $\psi(\beta)\in\fo_S^*$, we can write $\psi(\beta)=\gamma^{-1}\delta^p$ for
some $\gamma\in\Gamma$ and $\delta\in\fo_S^*$.  Then $(\delta,\beta)$ is a $K$-rational point on the curve
$y^p=\gamma\psi(z)$ whose $y$-coordinate is nonzero.  Since the $z$-coordinate of this point is $\beta$, the
result follows.
\end{proof}

In order to control the number $N_i$ from Lemma~\ref{curves}, we must control the genus of the curve $X_i$.
This computation is classical: the genus is $(p-1)(m-2)/2$ where $m$ is the total number of points in $\mathbb{P}^1(\bar K)$
which are either zeroes or poles of $\psi(z)$.  In particular, if $m=2$ then the genus is $0$, in which case $N_i$ can
be infinite.  We avoid this difficulty by applying the above result with $\psi(z)$ being the second iterate $\phi^2(z)$ of a given
function $\phi(z)$, so that $\psi(z)$ has a combined total of
at least three zeroes and poles by the following lemma.

\begin{lemma} \label{poles}
Let $\phi(z)\in \mathbb{C}(z)$ be any rational function of degree $d\ge 2$ which is not of the form
$\beta z^{\pm d}$ with $\beta\in \mathbb{C}^*$.  Then $\abs{\phi^{-2}(\{0,\infty\})}\ge 3$.
\end{lemma}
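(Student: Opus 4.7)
Let me write $T := \phi^{-1}(\{0,\infty\})$, so that $\phi^{-2}(\{0,\infty\}) = \phi^{-1}(T)$. The plan is to reduce to the case $\abs{T}=2$ and dispatch it with a Riemann--Hurwitz count. Since any nonconstant rational map $\mathbb{P}^1\to\mathbb{P}^1$ is surjective, the fibers $\phi^{-1}(0)$ and $\phi^{-1}(\infty)$ are nonempty and disjoint, giving $\abs{T}\ge 2$; and since fibers over distinct points are disjoint and nonempty, $\abs{\phi^{-1}(T)}\ge\abs{T}$, so if $\abs{T}\ge 3$ we are done immediately.

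It remains to handle $\abs{T}=2$. Write $T=\{a,b\}$. The disjoint nonempty subsets $\phi^{-1}(0), \phi^{-1}(\infty)$ of $T$ must each be singletons, so after relabelling $\phi^{-1}(0)=\{a\}$ and $\phi^{-1}(\infty)=\{b\}$, and each preimage then carries ramification index $d$. If $\{a,b\}=\{0,\infty\}$, then the ordered pair $(a,b)$ is either $(0,\infty)$ or $(\infty,0)$, forcing $\phi(z)=\beta z^d$ or $\phi(z)=\beta z^{-d}$ respectively, which is excluded by hypothesis. Hence at least one of $a,b$ lies outside $\{0,\infty\}$, and I relabel so that $a\notin\{0,\infty\}$.

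The key step is Riemann--Hurwitz for $\phi\colon\mathbb{P}^1\to\mathbb{P}^1$, which yields $\sum_P(e_\phi(P)-1)=2d-2$. The totally ramified points $a$ and $b$ already contribute $2(d-1)$, exhausting the sum, so $0$ and $\infty$ are the only branch values of $\phi$. Since $a\notin\{0,\infty\}$, the fiber over $a$ is unramified, so $\abs{\phi^{-1}(a)}=d\ge 2$, and therefore
\[
\abs{\phi^{-1}(T)} = \abs{\phi^{-1}(a)} + \abs{\phi^{-1}(b)} \ge 2 + 1 = 3.
\]

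I do not foresee a real obstacle: the crux is the rigidity of having both $0$ and $\infty$ be totally ramified values, which is what Riemann--Hurwitz pins down. The only point requiring attention is the singleton decomposition of $T$ in the case $\abs{T}=2$, which cleanly isolates the exceptional maps $\beta z^{\pm d}$.
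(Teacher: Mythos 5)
Your proof is correct and follows essentially the same route as the paper's: reduce to the case $\abs{\phi^{-1}(\{0,\infty\})}=2$, use Riemann--Hurwitz to conclude $\phi$ is unramified away from $0$ and $\infty$, and then use the hypothesis $\phi\ne\beta z^{\pm d}$ to find a point of $\phi^{-1}(\{0,\infty\})$ with $d$ distinct preimages. The only cosmetic difference is that you conclude with the bound $2+1=3$ where the paper notes the slightly sharper $d+1\ge 3$.
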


\begin{proof}
Write $m:=\abs{\phi^{-2}(\{0,\infty\})}$, so we must show that $m\ge 3$.
Plainly $m\ge\abs{\phi^{-1}(\{0,\infty\})}\ge 2$, so the conclusion holds
unless $\abs{\phi^{-1}(\{0,\infty\})}=2$.  In this case $\phi$ is totally ramified over both $0$ and $\infty$, so the
Riemann--Hurwitz formula (or writing down $\phi(z)$) implies that $\phi$ is unramified over all
other points.  Since $\phi(z)$ does not have the form $\beta z^{\pm d}$, we know that $\phi^{-1}(\{0,\infty\})\ne\{0,\infty\}$,
so that at least one point in $\phi^{-1}(\{0,\infty\})$ has $d$ distinct $\phi$-preimages.
Since each point has at least one preimage, we conclude that $m\ge d+1\ge 3$, as desired.
\end{proof}

We now prove Theorem~\ref{generalfollowsfromCHM}.

\begin{proof}[Proof of Theorem~\ref{generalfollowsfromCHM}]
Let $K$ be a number field, let $S$ be a finite set of places of $K$ with $S\supseteq S_{\infty}$, and let $\phi(z)\in K(z)$
have degree $d\ge 2$ where $m:=\abs{\phi^{-1}(\{0,\infty\})}$ is at least $3$.
Let $p$ be the smallest prime for which $p>d$ and $(p-1)(m-2)>2$.  Then $p=5$ if $d=2$ and $m=3$, and in all other
cases $p<2d$ by Bertrand's Postulate.
Let $\gamma_1,\dots,\gamma_t$ satisfy the conclusion of
Lemma~\ref{curves}, so that $\gamma_i\in K^*$ and $t\le p^{\abs{S}}$.
Writing $X_i$ for the curve $y^p=\gamma_i\phi(z)$, and $N_i$ for the number of points in $X_i(K)$ having
nonzero $y$-coordinate, it follows that $\abs{\phi(K)\cap\fo_S^*}\le\sum_{i=1}^t N_i$.
Since every point on $X_i$ having nonzero $y$-coordinate is nonsingular, we see that $N_i$ is bounded above by
the number of $K$-rational points on the unique smooth projective curve $Y_i$ over $K$ which is birational to $X_i$.
The genus $g$ of $X_i$ equals $(p-1)(m-2)/2$, so our choice of $p$ ensures that
\[
2\le g\le \frac{(\frac52d-1)(2d-2)}2.
\]
If Conjecture~\ref{CHM} is true then $\abs{Y_i(K)}$ is bounded by a constant which depends only on
the genus of $Y_i(K)$ and the degree $[K:\mathbb{Q}]$.  Since the genus is bounded by a function of $d$, and
the degree $[K:\mathbb{Q}]$ is bounded by a function of $\abs{S}$ (by Remark~\ref{bounddeg}), it follows that
$\abs{Y_i(K)}$ is bounded by a constant depending on $d$ and $\abs{S}$.  Since $t\le p^{\abs{S}}\le (5d/2)^{\abs{S}}$,
this proves that Conjecture~\ref{CHM} implies Conjecture~\ref{generalmainconjecture}, and Conjecture~\ref{mainconjecture} follows by applying Conjecture~\ref{generalmainconjecture} to $\phi^2(z)$ in light of Lemma~\ref{poles}.

\end{proof}

The first (and hardest) assertion in Proposition~\ref{generalfiniteness} follows from the above proof, by using
Faltings' theorem \cite{Faltings} instead of Conjecture~\ref{CHM}.
We now give a more elementary proof of Proposition~\ref{generalfiniteness}.

\begin{proof}[Proof of Proposition~\ref{generalfiniteness}]
If $\abs{\phi^{-1}(\{0,\infty\})}>2$ then the function $\psi(z):=\phi(z)+1/\phi(z)$ satisfies $\abs{\psi^{-1}(\{0,\infty\})}\ge 3$, so
$\psi(K)\cap\fo_S$ is finite by Siegel's theorem; but $\psi(\beta)$ is in $\fo_S$ whenever $\phi(\beta)$ is in $\fo_S^*$, so it follows that
$\phi(K)\cap\fo_S^*$ is finite.  Now assume that $\abs{\phi^{-1}(\{0,\infty\})}=2$, so that
$\phi(z)=\gamma\mu(z)^d$ for some $d\ge 1$, some $\gamma\in K^*$, and some
degree-one $\mu(z)\in K(z)$.  Let $S'$ be a finite set of places of $K$ such that $\gamma\in\fo_{S'}^*$, $S'\supseteq S$, and $\abs{S'}>1$.
Since $\mu(K)$ contains all but at most one element
of $K$, it follows that $\phi(K)$ contains all but at most one element of
$\gamma (\fo_{S'}^*)^d$.  Since $\gamma\in\fo_{S'}^*$ and $\abs{S'}>1$, this shows
that $\phi(K)\cap\fo_{S'}^*$ is infinite.
\end{proof}

We conclude this section by proving Proposition~\ref{finiteness}.

\begin{proof}[Proof of Proposition~\ref{finiteness}]
If $\phi(z)$ does not have the form $\beta z^{\pm d}$ then $\abs{\phi^{-2}(\{0,\infty\})}\ge 3$ by Lemma~\ref{poles},
so Proposition~\ref{generalfiniteness} implies that $\phi^2(K)\cap\fo_S^*$ has size $N<\infty$, whence
$\abs{\mathcal{O}_{\phi}(\alpha)\cap\fo_S^*}\le N+1=C(S,\phi)$.  Now consider $\phi(z)=\beta z^{\pm d}$ with $\beta\in K^*$
and $d\ge 2$.  Any $\alpha\in K^*$ satisfies
$\mathcal{O}_{\phi}(\alpha)\subseteq\fo_S^*$ where $S$ is the union of $S_{\infty}$ with the set of places $v$ of $K$
for which $\abs{\alpha}_v\ne 1$ or $\abs{\beta}_v\ne 1$.  If $\alpha\in K^*$ is not a root of unity then $\mathcal{O}_{\phi}(\alpha)$
is infinite, so that $\mathcal{O}_{\phi}(\alpha)\cap\fo_S^*$ is infinite.
\end{proof}

\section{Additional Remarks} \label{extra}

We make two additional remarks.  First, the proofs of Theorems \ref{monicpoly} and \ref{unicritical} can be modified
to treat some classes of Laurent polynomials.  For example, let $d$ and $d'$ be distinct positive integers, and let
$\phi(z)=(\gamma_dz^d+\dots+\gamma_1z+\gamma_0)/z^{d'}$ where $\gamma_i\in\fo_S$ and $\gamma_d,\gamma_0\in\fo_S^*$.  Suppose in addition that the numerator is not a $d$-th power in $\bar{K}[z]$.
Then $\abs{\phi(K)\cap\fo^*_S} \le C(s,d)$ for any $\alpha\in\mathbb P^1(K)$.
  Indeed, since $\gamma_0$ and $\gamma_d$ are assumed to be units, $\phi(\beta)$ cannot be in $\fo_S^*$ if $\abs{\beta}_v \neq 1$ for some $v\notin S$.
Thus we need only consider $\beta\in\fo_S^*$, and now the desired bound follows from the proof of
Theorem~\ref{monicpoly}.

As another example, consider $\phi(z)=(\gamma_d z^d+\dots+\gamma_1 z+\gamma_0)/z^{d'}$ where $d>d'$, $\gamma_i\in K$, and there is
some $v\notin S$ for which $\abs{\gamma_d}_v>\max(1,\abs{\gamma_i}_v)$ for each $i<d$.   Then
$\abs{\mathcal{O}_{\phi}(\alpha)\cap\fo^*_S} \le 1$ for any $\alpha \in \mathbb P^1(K)$, as the orbit of an $S$-unit cannot contain another $S$-integer by the proof of Theorem~\ref{unicritical}.
 Both this class of examples and the previous class are quite special,
 but they serve as further evidence for Conjectures~\ref{generalmainconjecture} and \ref{mainconjecture}.

We conclude by noting that the constant $C$ in Conjectures~\ref{generalmainconjecture} and
\ref{mainconjecture} must depend on both $s$ and $d$.
The necessity of
dependence on $s$ is clear.  Dependence on $d$ is also required, since by Lagrange interpolation one can construct
polynomials $\phi(z)\in K[z]$ in which the first several $\phi^i(\alpha)$ take on any prescribed distinct values in $K$
while also $\phi(z)$ has at least two zeroes (and hence is not $\beta z^{\pm d}$).


\begin{thebibliography}{8}
\newcommand{\au}[1]{{#1},}
\newcommand{\ti}[1]{\textit{#1},}
\newcommand{\jo}[1]{{#1}}
\newcommand{\vo}[1]{\textbf{#1}}
\newcommand{\yr}[1]{(#1),}
\newcommand{\pp}[1]{#1.}
\newcommand{\ppx}[1]{#1,}
\newcommand{\pps}[1]{#1;}
\newcommand{\bk}[1]{{#1},}
\newcommand{\inbk}[1]{in: {#1}}
\newcommand{\xxx}[1]{{arXiv:#1.}}

%
%
\bibitem{Beukers-Sch}
   \au{F.~Beukers, H.~P.~Schlickewei}
   \ti{The equation $x+y=1$ in finitely generated groups}
   \jo{Acta Arith.}
   \vo{78}
   \yr{1996} no. 2,
   \pp{189--199}

\bibitem{CHM}
\au{L.~Caporaso, J.~Harris, B.~Mazur}
\ti{Uniformity of rational points}
\jo{J. Amer. Math. Soc.}
\vo{10}
\yr{1997} no. 1,
\pp{1--35}

\bibitem{Evertse}
\au{J.-H.~Evertse}
\ti{On equations in $S$-units and the Thue-Mahler equation}
\jo{Invent.Math.}
\vo{75}
\yr{1984} no. 3,
\pp{561--584}

\bibitem{Faltings}
\au{G.~Faltings}
\ti{Endlichkeitssatze fur abelsche Varietaten uber Zahlkorpern}
\jo{Invent. Math.}
\vo{73}
\yr{1983} no. 3,
\pp{349--366}

\bibitem{Levin}
\au{A.~Levin}
\ti{Rational preimages in families of dynamical systems}
\jo{Monatsh. Math.}
\vo{168}
\yr{2012} no. 3--4,
\pp{473--501}

\bibitem{Pacelli}
\au{P.~Pacelli}
\ti{Uniform boundedness for rational points}
\jo{Duke Math. J.}
\vo{88}
\yr{1997} no. 1,
\pp{77-102}

\bibitem{Silvermanbook}
\au{J.~Silverman}
\ti{The arithmetic of dynamical systems}
Graduate Texts in Mathematics, \textbf{241}. Springer, New York, 2007.

\bibitem{Silverman}
\au{J.~Silverman}
\ti{Integer points, Diophantine approximation, and iteration of rational maps}
\jo{Duke Math. J.}
\vo{71}
\yr{1993} no. 3,
\pp{793--829}

\end{thebibliography}
\end{document}